\documentclass{gtpart}
\usepackage{amsmath,amssymb,amsthm,stmaryrd}
\usepackage[all]{xy}
\usepackage[usenames,dvipsnames]{xcolor}
\usepackage{tikz}
\usetikzlibrary{shapes,positioning,fit}
\usepackage{url}
\usepackage{hyperref}
\usepackage{enumerate}
\usepackage{tensor}
\usepackage{mathrsfs}
\usepackage{graphicx}
\usepackage{mathtools}
\usetikzlibrary{arrows}

\title{Morava $E$-homology of Bousfield-Kuhn functors\\on odd-dimensional 
spheres}
\author{Yifei Zhu}
\givenname{Yifei}
\surname{Zhu}
\email{zyf@umn.edu}
\urladdr{https://yifeizhu.github.io}

\subject{primary}{msc2000}{55S25}
\subject{secondary}{msc2000}{55N20, 55N34, 55Q51}

\parskip 0.7pc
\parindent 0pt

\newtheorem{thm}[equation]{Theorem}

\newtheorem{prop}[equation]{Proposition}

\theoremstyle{definition}

\theoremstyle{remark}
\newtheorem{rmk}[equation]{Remark}
\newtheorem{ex}[equation]{Example}

\def\co{\colon\thinspace}
\newcommand{\mb}[1]{\mathbb{#1}}

\newcommand{\Hom}{\ensuremath{{\rm Hom}}}

\newcommand{\Spf}{{\rm Spf\thinspace}}

\newcommand{\cF}{\overline {\mb F}}

\newcommand{\CC}{{\cal C}}

\newcommand{\CP}{{\cal P}}
\newcommand{\CR}{{\cal R}}

\newcommand{\DL}{Dyer-Lashof~}

\newcommand{\BC}{{\mb C}}

\newcommand{\BF}{{\mb F}}
\newcommand{\BG}{{\mb G}}

\newcommand{\BP}{{\mb P}}
\newcommand{\BQ}{{\mb Q}}
\newcommand{\BR}{{\mb R}}

\renewcommand{\SS}{{\bf S}}
\newcommand{\PP}{{\bf P}}
\newcommand{\BW}{{\mb W}}
\newcommand{\BZ}{{\mb Z}}

\newcommand{\md}{~~{\rm mod}~}
\newcommand{\ad}{\text{and}}

\newcommand{\id}{{\rm id}}
\newcommand{\op}{{\rm op}}

\newcommand{\Ext}{{\rm Ext}}

\newcommand{\nul}{{\rm nul}}

\newcommand{\coker}{{\rm coker}}
\newcommand{\A}{\alpha}
\newcommand{\B}{\beta}

\newcommand{\G}{\Gamma}

\renewcommand{\o}{\omega}

\newcommand{\T}{\tau}

\newcommand{\ce}{\coloneqq}
\newcommand{\lb}{\llbracket}
\newcommand{\rb}{\rrbracket}

\newcommand{\wt}[1]{\textcolor{white}{#1} \!~}

\newcommand{\ch}[2]{{#1 \choose #2}}

\makeatletter
\DeclareRobustCommand\widecheck[1]{{\mathpalette\@widecheck{#1}}}
\def\@widecheck#1#2{%
    \setbox\z@\hbox{\m@th$#1#2$}%
    \setbox\tw@\hbox{\m@th$#1%
       \widehat{%
          \vrule\@width\z@\@height\ht\z@
          \vrule\@height\z@\@width\wd\z@}$}%
    \dp\tw@-\ht\z@
    \@tempdima\ht\z@ \advance\@tempdima2\ht\tw@ \divide\@tempdima\thr@@
    \setbox\tw@\hbox{%
       \raise\@tempdima\hbox{\scalebox{1}[-1]{\lower\@tempdima\box
\tw@}}}%
    {\ooalign{\box\tw@ \cr \box\z@}}}
\makeatother

\numberwithin{equation}{section}

\begin{document}

\begin{abstract}
 As an application of Behrens and Rezk's spectral algebra model for unstable 
 $v_n$-periodic homotopy theory, we give explicit presentations for the 
 completed $E$-homology of the Bousfield-Kuhn functor on odd-dimensional spheres 
 at chromatic level $2$, and compare them to the level $1$ case.  The latter 
 reflects earlier work in the literature on $K$-theory localizations.  
\end{abstract}

\maketitle

\section{Introduction}

The rational homotopy theory of Quillen and Sullivan studies unstable homotopy 
types of topological spaces modulo torsion, or equivalently, after inverting 
primes.  Such homotopy types are computable by means of their {\em algebraic 
models}.  In particular, Quillen showed that there are equivalences of homotopy 
categories 
\[
 {\rm Ho}_\BQ({\rm Top_*})_2 \simeq {\rm Ho}_\BQ({\rm DGL})_1 \simeq 
 {\rm Ho}_\BQ({\rm DGC})_2  
\]
between simply-connected pointed topological spaces localized with respect to 
rational homotopy equivalences, connected differential graded Lie algebras over 
$\BQ$, and simply-connected differential graded cocommutative coalgebras over 
$\BQ$ \cite[Theorem I]{Quillen}.  

Let $p$ be a prime, $\BF_p$ be the field with $p$ elements, and $\cF_p$ be its 
algebraic closure.  Working prime by prime, one has $p$-adic analogues where 
equivalences detected through $H_*(-; \BQ)$ are replaced by those through 
$H_*(-; \BF_p)$.  Various algebraic models for $p$-adic homotopy types of spaces 
were developed \cite{Kriz, Goerss, Mandell}.  In the modern language of homotopy 
theory, these models are often formulated in terms of ``spectral'' algebra.  For 
example, Mandell's model is given by the functor that takes a connected 
$p$-complete nilpotent space $X$ of finite $p$-type to the $\cF_p$-cochains 
$H\cF_p^{\,X}$, where $H\cF_p^{\,X}$ denotes the function spectrum 
$F(\Sigma^\infty X, H\cF_p)$.  This spectrum is a commutative algebra over 
$H\cF_p$.  

More generally, through the prism of chromatic homotopy theory, Behrens and Rezk 
have established spectral algebra models for unstable {\em $v_n$-periodic} 
homotopy types \cite{BKTAQ} (cf.~\cite{AroneChing, Heuts, BKTAQsurvey}).  Here, 
instead of inverting primes, they work $p$-locally for a fixed prime $p$ and 
invert classes of maps called ``$v_n$-self maps'' (the case of $n = 0$ recovers 
rational homotopy).  Correspondingly, there is the $n$'th unstable monochromatic 
category $M_n^{\,f} {\rm Top}_*$ in the sense of \cite{Bousfield}.  They study 
the functor 
\begin{equation}
 \label{fcr}
 \SS_{T(n)}^{(-)} \co {\rm Ho}(M_n^{\,f} {\rm Top}_*)^{\op} \to {\rm Ho}\big( 
 {\rm Alg}_{\rm Comm}({\rm Sp}_{T(n)}) \big) 
\end{equation}
that sends a space $X$ to the $\SS_{T(n)}$-valued cochains $\SS_{T(n)}^X$.  This 
last spectrum is an algebra for the reduced commutative operad $\rm Comm$ in 
modules over $\SS_{T(n)}$, the localization of the sphere spectrum with respect 
to the telescope of a $v_n$-self map.  

Considering a variant of localization with respect to the Morava $K$-theory 
$K(n)$, Behrens and Rezk have obtained an equivalence 
\[
 \Phi_{K(n)}(X) \xrightarrow{\sim} {\rm TAQ}_{\SS_{K(n)}}(\SS_{K(n)}^X) 
\]
of $K(n)$-local spectra, on a class of spaces $X$ including spheres 
\cite[Theorem 8.1]{BKTAQ} (cf.~\cite[Section 8]{BKTAQsurvey}).  In more detail, 
the left-hand side arises from computing homotopy groups in the source category 
of \eqref{fcr}, where $\Phi_{K(n)} = L_{K(n)} \Phi_n$ is a version of the 
Bousfield-Kuhn functor (cf.~\cite{Kguide}).  This side is a derived realization 
of morphisms in the source.  The right-hand side is the topological 
Andr\'e-Quillen cohomology of $\SS_{K(n)}^X$ as an algebra over the operad 
$\rm Comm$ in $\SS_{K(n)}$-modules.  It is a derived realization of images of 
morphisms under the functor \eqref{fcr} in the target category.  Via a suitable 
Koszul duality between $\rm Comm$ and the Lie operad, we may view the spectrum 
${\rm TAQ}_{\SS_{K(n)}}(\SS_{K(n)}^X)$ as a Lie algebra model for the unstable 
$v_n$-periodic homotopy type of $X$.

\subsection{Main results}

The purpose of this paper is to make available calculations that apply Behrens 
and Rezk's theory to obtain quantitative information about unstable 
$v_n$-periodic homotopy types, in the case of $n = 2$.  These are based on our 
computation of power operations for Morava $E$-theory in \cite{me}.  

Let $E$ be a Morava $E$-theory spectrum of height $2$ with $E_* \cong \BW \cF_p 
\lb a \rb [u^{\pm 1}]$, where $|a| = 0$ and $|u| = -2$.  Recall that the {\em 
completed $E$-homology} functor is defined as $E^\wedge_*(-) \ce 
\pi_*(E \wedge -)_{K(2)}$.  It is $E_0$-linear dual to $E^*(-)$ with more 
convenient properties than $E_*(-)$ (see \cite[Section 3]{cong}).  

Building on and strengthening Rezk's results in \cite[\S 2.13]{h2}, we obtain 
the following.  

\begin{thm}
 \label{thm}
 Given any non-negative integer $m$, denote by $E^\wedge_*(\Phi_2 S^{2 m + 1})$ 
 the completed $E$-homology groups of the Bousfield-Kuhn functor applied to the 
 $(2 m + 1)$-dimensional sphere.  
 \begin{enumerate}[(i)]
  \item The group $E^\wedge_1(\Phi_2 S^{2 m + 1}) \cong 0$ if $m = 0$.  As an 
  $E_0$-module, it equals $(E_0 / p)^{\oplus p - 1}$ if $m = 1$.  It is a 
  quotient of $(E_0 / p^m)^{\oplus p - 1} \oplus E_0 / p^{m - 1}$ if $m > 1$.  
  
  \item More explicitly, 
  \[
   E^\wedge_1(\Phi_2 S^{2 m + 1}) \cong \left\{\!
   \begin{array}{lll}
    \dfrac{\bigoplus_{i = 1}^{p - 1} (E_0 / p^m) \cdot x_i \oplus 
    (E_0 / p^{m - 1}) \cdot x_p}{(r_1, \ldots, r_{m - 1})} & & \text{if \,\! 
    $2 \leq m \leq p + 2$} \\\\
    \dfrac{\bigoplus_{i = 1}^{p - 1} (E_0 / p^m) \cdot x_i \oplus 
    (E_0 / p^{m - 1}) \cdot x_p}{(r_{m - p - 1}, \ldots, r_{m - 1})} & & 
    \text{if \,\! $m > p + 2$} 
   \end{array}
   \right.
  \]
  where $r_j = r_j(x_1, \ldots, x_p) = w_0^{m - 1 - j} \sum_{i = 1}^p 
  d_{i, \, j + 1} \, x_i$.  Here, as in \cite[Theorem 1.6]{me}, 
  \[
   d_{i,\T} = \sum_{n = 0}^{\T - 1} (-1)^{\T - n} \, w_0^n 
              \sum_{\stackrel{\scriptstyle m_1 + \cdots + m_{\T - n} = \T + i} 
              {\stackrel{\scriptstyle 1 \,\leq\, m_s \,\leq\, p + 1}{m_{\T - n} 
              \,\geq\, i + 1}}} w_{m_1} \cdots w_{m_{\T - n}} 
  \]
  where the coefficients $w_i \in E_0 \cong \BW \cF_p \lb a \rb$ are defined by 
  the identity 
  \[
   \sum_{i = 0}^{p + 1} w_i \, b^i = (b - p) \big( b + (-1)^{\,p} \big)^p - 
   \big( a - p^2 + (-1)^{\,p} \big) b 
  \]
  in the variable $b$, so that $w_{p + 1} = 1$, $w_1 = -a$, $w_0 = 
  (-1)^{\,p + 1} p$, and the remaining coefficients 
  \[
   w_i = (-1)^{\,p\,(\,p - i + 1)} \left[ \ch{p}{i - 1} + (-1)^{\,p + 1} \, p \, 
   \ch{\,p\,}{i} \right] 
  \]
  In particular, each relation $r_j$ contains a term $(-1)^{\,j + 1} 
  w_0^{m - 1 - j} w_1^{\,j} \, x_p$.  

  \item The group $E^\wedge_0(\Phi_2 S^{2 m + 1}) \cong 0$ for any $m \geq 0$.  
 \end{enumerate}
\end{thm}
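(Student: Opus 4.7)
The strategy is to combine three ingredients: (a) the Behrens--Rezk equivalence $\Phi_{K(2)}(S^{2m+1}) \simeq \mathrm{TAQ}_{\SS_{K(2)}}\bigl(\SS_{K(2)}^{S^{2m+1}}\bigr)$ recalled in the introduction, (b) Rezk's algebraic model in \cite[\S 2.13]{h2}, which for an odd sphere presents $E^\wedge_*$ of this TAQ spectrum as the homology of an explicit $p$-adically filtered complex $C^\bullet(m)$ built from the height-$n$ ring of additive power operations, and (c) the height-two presentation of that ring from \cite[Theorem 1.6]{me}, including the quadratic Adem-type coefficients $w_0, \ldots, w_{p+1}$.

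Part (iii) falls out first. Rezk's model places $E^\wedge_*(\Phi_2 S^{2m+1})$ in a single parity of degrees determined by the odd dimension $2m+1$, and since Morava $E$-theory is $2$-periodic with $E_{\mathrm{odd}} = 0$, the even component $E^\wedge_0(\Phi_2 S^{2m+1})$ vanishes uniformly in $m$. Parts (i) and (ii) then amount to computing the one remaining homology group. The generators $x_1, \ldots, x_p$ of (ii) are $E_0$-duals of the length-one monomial basis of the power operation algebra at height two; the asymmetry between $E_0/p^m$ for $i < p$ and $E_0/p^{m-1}$ for $i = p$ records the fact that the operation dual to $x_p$ is the relative Frobenius, which contributes one extra factor of $p$ to the $p$-adic filtration. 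Substituting the explicit $w_i$ into Rezk's differential and iterating the defining relation $\sum_i w_i b^i = (b - p)\bigl(b + (-1)^p\bigr)^p - \bigl(a - p^2 + (-1)^p\bigr) b$ exactly $j$ times produces the closed form for $d_{i,\tau}$, and therefore for $r_j = w_0^{m-1-j} \sum_i d_{i,j+1} x_i$. The leading term $(-1)^{j+1} w_0^{m-1-j} w_1^{\,j} x_p$ appears because $w_1 = -a$ is the only coefficient coupling $x_p$ to the other generators through the Frobenius step, which in turn justifies the quotient description in (i).

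The main obstacle is the case split at $m = p + 2$. For $m \leq p + 2$ all of $r_1, \ldots, r_{m-1}$ remain simultaneously independent in $C^\bullet(m)$ and cut out the quotient as stated. For $m > p + 2$, however, the older relations $r_j$ with $j < m - p - 1$ become redundant modulo the later ones: each iteration of the height-two Adem relation shifts the $p$-adic filtration by one, so after $p + 1$ iterations the contribution of the oldest relation is absorbed into $p^m E_0$ and vanishes after reduction modulo $p^m$. Verifying this redundancy rigorously requires an inductive tracking of the $p$-adic valuations of the $d_{i,\tau}$; the closed formula from \cite[Theorem 1.6]{me} has been organized precisely to make this counting tractable, and this discrete $p$-adic accounting, rather than any further homological input, is the heart of the argument.
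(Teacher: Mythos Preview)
Your proposal has the right large-scale inputs---the Behrens--Rezk comparison, Rezk's framework in \cite[\S 2.13]{h2}, and the height-two formulas from \cite{me}---but it misses the concrete object that actually carries the computation, and this leads to a genuine error in part (iii).

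The paper does not work with an unspecified ``$p$-adically filtered complex $C^\bullet(m)$.''  Rezk's spectral sequences collapse to identify $E^\wedge_{1-s}(\Phi_2 S^{2m+1})$ with $\Ext_\G^s(\o^m,\nul)$, and by the Koszul property of $\G$ this Ext is the cohomology of the three-term Koszul complex
\[
 \CC^\bullet(L_{b^m},L_0)\colon\ A_0 \xrightarrow{\,-b^m\,} A_1 \xrightarrow{\,b'^m\,} A_1/A_0,
\]
where $A_1 \cong A_0[b]/\big(w(a,b)\big)$ and $b'=\iota(b)$ is the Atkin--Lehner conjugate with $bb'=w_0$.  The generators $x_i$ are simply the classes of $b^i$ in $A_1/A_0$, not duals of power operations; the relations $r_j$ are the images of the $A_0$-basis of $A_1$ under $b'^m$, expanded via the formula for $b'^k$ in \cite{me}; and the $p^{m-1}$-torsion on $x_p=b^p$ comes from the direct manipulation $w_0^{m-1}b^p = -b^{m-1}b'^m - w_0^{m-1}(w_pb^{p-1}+\cdots+w_2b)$, not from an abstract ``Frobenius contributes an extra $p$'' principle.

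The parity argument you give for (iii) does not work.  The Koszul complex has a nonzero term $\CC^1=A_1$ sitting in the slot that contributes to $E^\wedge_0$, so the even-degree group is not forced to vanish by $2$-periodicity or by $E_{\rm odd}=0$.  One must actually prove $H^1\CC^\bullet(L_{b^m},L_0)=0$: if $b'^m g\in A_0$, one shows by induction on $m$ that $g\equiv -b^m\tilde f \bmod w$ for some $\tilde f\in A_0$, using $bb'=w_0$ and that $p$ is not a zero-divisor in $A_1$.  Without this step, part (iii) is unproved, and without the Koszul complex made explicit, parts (i)--(ii) remain a narrative rather than a computation.
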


Since $E$ is $2$-periodic, the above determines the completed $E$-homology in 
all degrees.  

\begin{rmk}
 There is a ring structure on $E^\wedge_1(\Phi_2 S^{2 m + 1})$.  Indeed, each 
 generator $x_i$ is a power $b^i$ of a certain element $b$.  See Section 
 \ref{subsec:H^2} for details.  
\end{rmk}

\begin{rmk}
 In \cite[Sections 5.3--5.4]{Wangthesis}, Wang obtained an equivalent 
 presentation of $E^\wedge_*(\Phi_2 S^{2 m + 1})$ for $m = 1$ and any prime $p$.  
 He then used it as the input of a spectral sequence and computed 
 $\pi_*(\Phi_{K(2)} S^3)$ at $p \geq 5$.  
\end{rmk}

\begin{ex}
 We apply Theorem \ref{thm} and compute $E^\wedge_1(\Phi_2 S^{2 m + 1})$ at $p = 
 2$ for small values of $m$.  We have $w(a, b) = b^3 - a b - 2$ so that $w_3 = 
 1$, $w_2 = 0$, $w_1 = -a$, and $w_0 = -2$.  

 \begin{itemize}
  \item When $m = 2$, since $r_1 = 2 x_1 - a x_2$, $E^\wedge_1(\Phi_2 S^5)$ is 
  the quotient of $(E_0 / 4) \cdot x_1 \oplus (E_0 / 2) \cdot x_2$ subject to 
  the relation $a x_2 = 2 x_1$.  
  
  \item When $m = 3$, we have $r_1 = -4 x_1 + 2 a x_2$ and $r_2 = 2 a x_1 - a^2 
  x_2$ so that the relations are 
  \[
   \hskip -1.12cm
   \begin{split}
    a^2 x_2 = & ~ 2 a x_1 \\
    2 a x_2 = & ~ 4 x_1 
   \end{split}
  \]
  
  \item When $m = 4$, we have $r_1 = 8 x_1 - 4 a x_2$, $r_2 = -4 a x_1 + 2 a^2 
  x_2$, and $r_3 = 2 a^2 x_1 + (-a^3 + 4) x_2$.  Thus the relations are 
  \[
   \hskip -.12cm 
   \begin{split}
      a^3 x_2 = & ~ 2 a^2 x_1 + 4 x_2 \\
    2 a^2 x_2 = & ~ 4 a x_1 \\
      4 a x_2 = & ~ 8 x_1 
   \end{split}
  \]
  
  \item When $m = 5 > p + 2$, we have $r_2 = 8 a x_1 - 4 a^2 x_2$, $r_3 = -4 a^2 
  x_1 + (2 a^3 - 8) x_2$, and $r_4 = (2 a^3 - 8) x_1 + (-a^4 + 8 a) x_2$.  Thus 
  the relations are 
  \[
   \hskip .5cm 
   \begin{split}
      a^4 x_2 = & ~ (2 a^3 - 8) x_1 + 8 a x_2 \\
    2 a^3 x_2 = & ~ 4 a^2 x_1 + 8 x_2 \\
    4 a^2 x_2 = & ~ 8 a x_1 
   \end{split}
  \]
 \end{itemize}

 The relations above show that the bounds for $p$-power torsion in Theorem 
 \ref{thm} are sharp (see \cite[\S 2.5]{Btwo} and \cite{Selick}).  Also, as in 
 part (ii) of the theorem, each $r_j$ contains a term $2^{m - 1 - j} a^{\,j} 
 x_2$.  Unfortunately, it is  impossible to simplify the relations for 
 $E^\wedge_1(\Phi_2 S^{2 m + 1})$ into $2^{m - 1 - j} a^{\,j} x_2 = 0$ by an 
 $E_0$-linear change of variables with $x_i$.  See Remark \ref{rmk} below.  
\end{ex}

\subsection{A comparison to the case of $n = 1$}

As an application of Behrens and Rezk's theory, Theorem \ref{thm} is a step 
toward the program initiated in \cite{AM} to compute the unstable 
$v_n$-periodic homotopy groups of spheres using stable $v_n$-periodic homotopy 
groups and Goodwillie calculus.  See also \cite{Wang, Wangthesis}.  Given the 
computations of Davis and Mahowald in the 1980s for the case of $n = 1$, we 
discuss a version of Theorem \ref{thm} at height $1$ according to this program.  

Davis and Mahowald showed that, $K(1)$-locally at a fixed prime $p$, the Moore 
spectrum $\SS^{-1} / p^m$ with $i$'th space $S^{i - 1} \cup_{p^m} e^i$ is 
equivalent to the suspension spectrum of a stunted $B\Sigma_p$.  Via the 
Goodwillie tower of the identity functor on the category of pointed spaces, the 
latter can be identified with $\Phi_1(S^{2 m + 1})$, again $K(1)$-locally (or 
$T(1)$-locally, due to the validity of the Telescope Conjecture at height $1$).  
We thus obtain a variant of Theorem \ref{thm}.  

\begin{prop}
 \label{prop}
 Let $E$ be a Morava $E$-theory spectrum of height $1$, with $E_0 \cong \BW 
 \cF_p$.  Assume that $p \neq 3$ and that, if $p = 2$, $m \equiv 0, 3 \md 4$.  
 Then 
 \[
  E^\wedge_0(\Phi_1 S^{2 m + 1}) \cong E_0 / p^m \hskip 2cm m \geq 0 
 \]
\end{prop}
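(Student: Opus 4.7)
The plan is to chain together the two $K(1)$-local equivalences sketched just above---the Davis--Mahowald splitting and the identification of the top Goodwillie layer of the identity functor---to obtain $\Phi_1(S^{2m+1}) \simeq_{K(1)} \SS^{-1}/p^m$, and then to read off $E^\wedge_0$ from the defining cofiber sequence of the Moore spectrum. Since the telescope conjecture holds at height $1$, the distinction between $T(1)$- and $K(1)$-local equivalences disappears, so the equivalence of spectra may be used directly inside $E^\wedge_*$.

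First I would assemble the two inputs. Davis and Mahowald's theorem provides the $K(1)$-local equivalence between $\SS^{-1}/p^m$ and the suspension spectrum of an appropriate stunted $B\Sigma_p$; the Goodwillie tower of the identity functor, whose top nonzero layer for $S^{2m+1}$ involves precisely this stunted classifying space, then gives the equivalence of the latter with $\Phi_1(S^{2m+1})$ after $K(1)$-localization. The hypothesis $p \neq 3$ and the congruence $m \equiv 0, 3 \pmod 4$ at $p = 2$ enter exactly here: they are the conditions under which a $v_1$-self map on $\SS/p^m$ of the required periodicity exists, so that the Davis--Mahowald splitting is a clean identification rather than an assembly with error terms.

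Second I would carry out the homological algebra. Smashing the cofiber sequence $\SS \xrightarrow{p^m} \SS \to \SS/p^m$ with $E$ and taking $\pi_*$ yields a long exact sequence that, because $E_* \cong \BW\cF_p[u^{\pm 1}]$ is $p$-torsion free, degenerates into short exact sequences $0 \to E_k/p^m \to E^\wedge_k(\SS/p^m) \to 0$. Combined with the $2$-periodicity of $E$ and the suspension shift in the previous step, this produces $E^\wedge_0(\Phi_1 S^{2m+1}) \cong E_0/p^m$.

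The main obstacle is not the second step, which is routine, but the first: pinning down the precise form of the Davis--Mahowald identification and its Goodwillie-tower companion, and in particular keeping track of suspension shifts so that the answer lands in degree $0$ rather than some other even degree. For $p = 2$ the congruence $m \equiv 0, 3 \pmod 4$ also has to be matched against the classification of $v_1$-self maps on $\SS/2^m$, since the periodicity of these self maps depends on $m$ modulo $4$. Once those inputs are secured from the literature, the passage to completed $E$-homology is immediate.
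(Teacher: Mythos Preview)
Your proposal is correct and follows essentially the same route as the paper: identify $\Phi_1(S^{2m+1})$ $K(1)$-locally with a stunted $B\Sigma_p$ via the Goodwillie tower (the paper cites \cite{Koverview} for this), invoke Davis--Mahowald at $p=2$ and Davis for $p>3$ to identify that with a shifted Moore spectrum, and then read off $E^\wedge_0$ from the defining cofiber sequence.

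One small correction on where the hypotheses enter: they are not really conditions for the existence of $v_1$-self maps on $\SS/p^m$.  At $p=2$ the congruence $m\equiv 0,3\pmod 4$ is what makes James periodicity line up so that the relevant stunted projective space is exactly a Moore spectrum---the paper writes out the chain $\PP_1^{8n}\simeq\PP_{1-8n}^0\simeq\SS^{-1}/2^m$ for $m=4n$ using \cite[Proposition 2.1 and proof of Theorem 4.2]{DM}, which also handles the suspension bookkeeping you flagged.  The exclusion of $p=3$ simply reflects the stated range of the theorems in \cite{Davis} rather than any obstruction with self maps.
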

\begin{proof}
 For non-negative integers $k$ and $b$, let $L(k)_b \ce e_k \Sigma^\infty 
 (B\BF_p^k)^{b \bar{\rho}_k}$ be the stable summand of the Thom space 
 $(B\BF_p^k)^{b \bar{\rho}_k}$ associated to the Steinberg idempotent, where 
 $b \bar{\rho}_k$ denotes the direct sum of $b$ copies of the $k$-dimensional 
 reduced real regular representation of $\BF_p^k$ (cf.~\cite[Remark 
 5.4]{BKTAQ}).  For $b \leq t$, let $L(k)_b^t$ be the fiber of the natural map 
 of spectra $L(k)_b \to L(k)_{t + 1}$ (see \cite[Chapter 2, esp.~Section 
 2.3]{GoodEHP}).  When $k = 1$, it is a stunted $B\Sigma_p$ 
 (cf.~\cite{MitchellPriddy}).  In particular, if $p = 2$, $L(1)_b^t \simeq 
 \PP_b^t$, the suspension spectrum of the stunted real projective space 
 $\BR\BP^t / \BR\BP^{b - 1}$.  In this case, when $m = 4 n$, we have 
 \[
  \begin{split}
   \Phi_1(S^{2 m + 1}) \simeq & ~ L_{K(1)} \Sigma^{2 m + 1} L(1)_1^{2 m} 
   \hskip .48cm \text{by \cite[Theorem 7.20]{Koverview}} \\
                       \simeq & ~ L_{K(1)} \Sigma^{2 m + 1} \PP_1^{8 n} \\
                       \simeq & ~ L_{K(1)} \Sigma^{2 m + 1} \PP_{1 - 8 n}^0 
   \hskip .6cm \text{by \cite[Proposition 2.1]{DM}} \\
                       \simeq & ~ L_{K(1)} \Sigma^{2 m + 1} \SS^{-1} / 2^m
   \hskip .31cm \text{by \cite[proof of Theorem 4.2]{DM}} \\
                       \simeq & ~ L_{K(1)} \SS^{2 m} / 2^m
  \end{split}
 \]
 and thus $E^\wedge_0(\Phi_1 S^{2 m + 1}) \cong E_0 / 2^m$.  It is similar when 
 $m = 4 n + 3$.  For $p > 3$, we apply \cite[Corollary 1.7 and Theorem 
 1.8]{Davis}.  
\end{proof}

\begin{rmk}
 There has been extensive work on the case of $v_1$-periodic homotopy theory.  
 See \cite[\S 9.11]{Bodd} and \cite[\S\S 8.6--8.7]{Btwo} for an alternative 
 approach to a more general result than the above where the assumptions on $p$ 
 and $m$ are removed (cf.~\cite[\S 2.13]{h2}).  See also \cite[esp.~Theorem 
 3.1]{Dsurvey} and the references therein for related information in this case.  
\end{rmk}

\begin{rmk}
 For any $n$, since $\Phi_n$ preserves fiber sequences, there is a natural map 
 \[
  \Sigma^2 \Phi_n(X) \to \Phi_n(\Sigma^2 X) 
 \]
 In view of the $2$-periodicity of $E$, this induces a map on completed 
 $E$-homology in the same degree.  Thus the groups 
 $\{E^\wedge_*(\Phi_n S^{2 m + 1})\}_{m \geq 0}$ form a direct system.  Homotopy 
 (co)limits of generalized Moore spectra are closely related to various kinds of 
 localizations of the sphere spectrum (see, e.g., \cite[Proposition A.3]{AM} and 
 \cite[Proposition 7.10]{HS99}).  On the other hand, note that completed 
 $E$-homology does not preserve homotopy colimits \cite{Hovey}.  Nevertheless, 
 based on computational evidence from Theorem \ref{thm}, Proposition \ref{prop}, 
 and further, we hope to study the relationship between the $K(n)$-local sphere 
 and the Bousfield-Kuhn functor on odd-dimensional spheres hinted in \cite[\S\S 
 3.20--3.21]{fkl'}.  
\end{rmk}

\subsection{Acknowledgements}

I thank Mark Behrens, Paul Goerss, Guchuan Li, Charles Rezk, Guozhen Wang, and 
Zhouli Xu for helpful discussions.  I am especially grateful to Mark for 
introducing me into the program of computing unstable periodic homotopy groups 
of spheres and to Charles for sharing his knowledge in power operations.  

I thank Lennart Meier for his helpful remarks on an earlier draft of this paper, 
particularly one that saved me from an error in the main results.

\section{Koszul complexes for modules over the Dyer-Lashof algebra of Morava 
$E$-theory}

Let $E$ be a Morava $E$-theory spectrum of height $n$ at the prime $p$.  Its 
formal group $\Spf E^0 \BC \BP^\infty$ over $E_0 \cong \BW \cF_p \lb u_1, 
\ldots, u_{n - 1} \rb$ is the Lubin-Tate universal deformation of a formal group 
$\BG$ over $\cF_p$ of height $n$.  

Generalizing the Lubin-Tate deformation theory, Strickland shows that for each 
$k \geq 0$ there is a ring $A_k \cong E^0 B\Sigma_{p^k} / I_k$ classifying 
subgroups of degree $p^k$ in the universal deformation, where $I_k$ is the ideal 
generated by the image of all transfer maps from inclusions of the form 
$\Sigma_i \times \Sigma_{p^k - i} \subset \Sigma_{p^k}$ with $0 < i < p^k$ 
\cite[Theorem 1.1]{Str98}.  In particular, $A_0 \cong E_0$ and there are ring 
homomorphisms 
\[
 s = s_k, \, t = t_k \co A_0 \to A_k \quad \ad \quad \mu = \mu_{k, m} \co 
 A_{k + m} \to A_k \! \tensor[^s]{\otimes}{^t_{\!A_0}} A_m 
\]
classifying the source and target of an isogeny of degree $p^k$ on the universal 
deformation and the composition of two isogenies.  

As $E$ is an $E_\infty$-ring spectrum, there are (additive) power operations 
acting on the homotopy of $K(n)$-local commutative $E$-algebra spectra.  A {\em 
$\G$-module} is an $A_0$-module $M$ equipped with structure maps (the power 
operations) 
\[
 P_k \co M \to ~^t\,A_k \! \tensor[^s]{\otimes}{_{A_0}} M \hskip 1cm k \geq 0 
\]
which are a compatible family of $A_0$-module homomorphisms.  These power 
operations form the {\em \DL algebra} $\G$ for the $E$-theory, with graded 
pieces $\G[k] \ce \Hom_{A_0}(^sA_k, A_0)$, $k \geq 0$.  There is a tensor 
product $\otimes$ for $\G$-modules \cite[\S 4.1]{h2}.  

The structure of a $\G$-module is determined by $P_1$, subject to a condition 
involving $A_2$, i.e.~the existence of the dashed arrow in the diagram  
\begin{equation}
 \label{diag}
 \begin{tikzpicture}[baseline=(current bounding box.center)]
  \node (LT) at (0, 2.5) {$M$};
  \node (RT) at (4, 2.5) {$^t\,A_1 \! \tensor[^s]{\otimes}{_{A_0}} M$};
  \node (LB) at (0, 0) {$^t\,A_2 \! \tensor[^s]{\otimes}{_{A_0}} M$};
  \node (RB) at (4, 0) {$^t\,A_1 \! \tensor[^s]{\otimes}{^t_{\!A_0}} 
                             A_1 \! \tensor[^s]{\otimes}{_{A_0}} M$};
  \draw [->] (LT) -- node [above] {$\scriptstyle P_1$} (RT);
  \draw [dashed, ->] (LT) -- (LB);
  \draw [->] (RT) -- node [right] {$\scriptstyle \id \otimes P_1$} (RB);
  \draw [->] (LB) -- node [above] {$\scriptstyle \mu \otimes \id$} (RB);
 \end{tikzpicture}
\end{equation}
\cite[Proposition 7.2]{h2}.  This manifests the fact that the ring $\G$ is {\em 
Koszul} and, in particular, {\em quadratic} \cite{koszul}.  

Let $D_0 \ce A_0$, $D_1 \ce A_1$, and 
\[
 D_k \ce \coker \left( \bigoplus_{i = 0}^{k - 2} A_1^{\otimes i} \otimes A_2 
 \otimes A_1^{k - i - 2} \xrightarrow{\id \otimes \mu \otimes \id} 
 A_1^{\otimes k} \right) \hskip 1cm k \geq 2 
\]
Given $\G$-modules $M$ and $N$, Rezk defines the {\em Koszul complex} 
$\CC^\bullet(M, N)$ by 
\[
 \CC^k(M, N) \ce \Hom_{A_0}(M, D_k \otimes_{A_0} N)
\]
with appropriate coboundary maps \cite[\S 7.3]{h2}.  

\begin{prop}
 \label{prop:koszul}
 If $M$ is projective as an $A_0$-module, then 
 \[
  \Ext_\G^k(M, N) \cong H^k \CC^\bullet(M, N) 
 \]
 In particular, if $k > n$, $D_k \cong 0$ and so $\Ext_\G^k(M, N) \cong 0$.  
\end{prop}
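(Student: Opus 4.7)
The plan is to identify $\CC^\bullet(M, N)$ as the $\Hom_\G$-complex of a free $\G$-module resolution of $M$ built from the Koszul structure of $\G$. Concretely, I would construct the Koszul resolution $K_\bullet \to A_0$ of the trivial $\G$-module, with $K_k = \G \otimes_{A_0} D_k^\vee$ the free $\G$-module on $D_k^\vee = \Hom_{A_0}(D_k, A_0)$, and differential induced by the natural maps $D_k \to A_1 \otimes_{A_0} D_{k-1}$ coming from the decomposition $A_1^{\otimes k} = A_1 \otimes_{A_0} A_1^{\otimes (k-1)}$. The cokernel definition of $D_k$ with respect to the quadratic relations from $A_2$ is set up precisely so that acyclicity of $K_\bullet$ is equivalent to $\G$ being Koszul, which is asserted just before the proposition as a consequence of \eqref{diag} together with the general theory of quadratic algebras in \cite{koszul}.

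Next, since $M$ is projective (hence flat) over $A_0$, the complex $K_\bullet \otimes_{A_0} M$ remains exact and gives a free $\G$-module resolution of $M$. Applying $\Hom_\G(-, N)$, together with the free-forgetful adjunction and the $A_0$-projectivity of $M$ to move $M$ through the tensor factor, yields isomorphisms
\[
 \Hom_\G(K_k \otimes_{A_0} M, N) \cong \Hom_{A_0}(D_k^\vee \otimes_{A_0} M, N) \cong \Hom_{A_0}(M, D_k \otimes_{A_0} N) = \CC^k(M, N)
\]
where the second step also uses that each $D_k$ is finitely generated projective over $A_0$, a consequence of Strickland's structure theorem for $A_k$ \cite[Theorem 1.1]{Str98}. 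Taking cohomology then gives $\Ext_\G^k(M, N) \cong H^k \CC^\bullet(M, N)$.

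For the vanishing $D_k \cong 0$ when $k > n$, I would identify $D_k$ with the degree-$k$ part of the quadratic dual $\G^!$ and argue that the latter is concentrated in degrees $\le n$ in the Lubin-Tate setting: any order-$p^k$ subgroup of the height-$n$ universal deformation decomposes into a tower of $k$ degree-$p$ subgroups, and once $k$ exceeds $n$ these decompositions saturate the quadratic relations imposed by $A_2$, forcing the cokernel $D_k$ to collapse.

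The main obstacle will be setting up the Koszul resolution precisely in the twisted $\G$-module category, where the structure maps $M \to A_k \otimes_{A_0} M$ behave more like coactions than ordinary actions of a graded ring. In particular, verifying acyclicity of $K_\bullet$ beyond the quadratic condition in \eqref{diag} requires importing the full strength of Koszulness from \cite{koszul} and checking its compatibility with Strickland's description of $A_k$; the identification $D_k^{\vee\vee} \cong D_k$ needed for the final adjunction also rests on the finite projectivity of $D_k$ over $A_0$, which itself has to be extracted from the cokernel presentation.
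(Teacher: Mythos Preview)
The paper does not prove this proposition at all: its entire proof is the single sentence ``This is \cite[Proposition 7.4]{h2}.'' So your proposal is not so much a different route as an attempt to unpack what the cited reference actually does.

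As such an unpacking, your outline is broadly in the right spirit: for a Koszul algebra one builds the Koszul resolution of the ground ring, tensors up to resolve $M$, and reads off $\Ext$ as cohomology of the resulting $\Hom$-complex. The identification of $D_k$ with the graded pieces of the quadratic dual and the use of Strickland's theorem to get finite projectivity of the $A_k$ (hence of $D_k$) over $A_0$ are the correct inputs. Your caveat about the ``twisted'' nature of $\G$-modules is well placed: in Rezk's setup the $\G$-action is encoded via maps $M \to A_k \otimes_{A_0} M$ with the source/target bimodule structure, so the free objects and the adjunction you invoke have to be formulated in that twisted setting rather than for an ordinary graded ring; this is handled carefully in \cite{koszul} and \cite{h2}. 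Your argument for $D_k \cong 0$ when $k > n$ is the one genuinely soft spot: the sentence about decompositions ``saturating the quadratic relations'' is suggestive but not a proof, and in fact the vanishing is one of the main outputs of \cite{koszul} rather than something one reads off directly from subgroup combinatorics. If you want a self-contained argument here you would need to reproduce the PBW-type analysis from that paper.
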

\begin{proof}
 This is {\cite[Proposition 7.4]{h2}}.  
\end{proof}

\subsection{The case of $n = 2$}
\label{subsec:n = 2}

Choose a {\em preferred $\CP_N$-model} for $E$ in the sense of \cite[Definition 
3.29]{ho} so that the formal group of $E$ is isomorphic to the formal group of a 
universal deformation of a supersingular elliptic curve satisfying a list of 
properties.  

Using the theory of dual isogenies of elliptic curves, Rezk identifies that $D_2 
\cong A_1 / s(A_0)$ \cite[Proposition 9.3]{h2}.  He also classifies $\G$-modules 
of rank $1$ in this case \cite[Proposition 9.7]{h2}.  In particular, each of 
them takes the form $L_\B$ with structure map 
\[
 P \co L_\B \to ~^t \, A_1 \! \tensor[^s]{\otimes}{_{A_0}} L_\B 
\]
\[
 x \mapsto \B \otimes x ~~ 
\]
where $x$ is a generator for the underlying $A_0$-module, and $\B \in A_1$ is 
such that $\iota(\B) \cdot \B \in s(A_0)$ with $\iota(-)$ the Atkin-Lehner 
involution (this condition on $\B$ corresponds to the condition in 
\eqref{diag}).  Moreover, $L_1$ is the unit object in the symmetric monoidal 
category of $\G$-modules with respect to $\otimes$ and $L_{\B_1} \otimes 
L_{\B_2} \cong L_{\B_1 \B_2}$.  Thus $L_\B$ is $\otimes$-invertible as a 
$\G$-module if and only if $\B \in A_1^\times$.  

Now let $M = L_\A$ and $N = L_\B$.  We have identifications 
\[
 A_0 \xrightarrow{\sim} \CC^0(M, N) = \Hom_{A_0}(M, N) \hskip 5.6cm f \mapsto (x 
 \mapsto f \, y) 
\]
\[
 A_1 \xrightarrow{\sim} \CC^1(M, N) = \Hom_{A_0}(M, ~ ^t \, A_1 \! 
 \tensor[^s]{\otimes}{_{A_0}} N) \hskip 3.65cm g \mapsto (x \mapsto g \otimes y) 
\]
\[
 A_1 / s(A_0) \xrightarrow{\sim} \CC^2(M, N) = \Hom_{A_0}\left( M, ~ 
 ^{\iota^2 s} \, \big( A_1 / s(A_0) \big) \! \tensor[^s]{\otimes}{_{A_0}} N 
 \right) \hskip .75cm h \mapsto (x \mapsto h \otimes y) 
\]
Thus the Koszul complex in this case is 
\[
 A_0 \xrightarrow{d_0} A_1 \xrightarrow{d_1} A_1 / s(A_0) 
\]
with $d_0 \, f = \iota(f) \B - f \A$ and $d_1 g = \iota(g) \B + g \iota(\A)$ 
\cite[\S 9.18]{h2}.  

More explicitly, we have identifications 
\[
 A_0 \cong \BW \cF_p \lb a \rb \quad \ad \quad A_1 \cong \BW \cF_p \lb a, b \rb 
 / \big( w(a, b) \big) 
\]
where 
\[
 w(a, b) = \sum_{i = 0}^{p + 1} w_i \, b^i = (b - p) \big( b + (-1)^{\,p} 
 \big)^p - \big( a - p^2 + (-1)^{\,p} \big) b 
\]
\cite[Theorem 1.2]{me}.  Note that the parameters $a$ and $b$ are chosen as in 
\cite[\S 9.15]{h2} and they correspond precisely to $h$ and $\A$ in \cite{ho, 
me}.  In particular, the $\G$-module of invariant 1-forms is $\o = L_b$.  

\begin{rmk}
 \label{rmk}
 As we will see in Section \ref{sec:pf}, the generators $x_i$ in Theorem 
 \ref{thm} (ii) depend on the choice of the parameter $b$ for $A_1$.  We do not 
 know if a different choice would make the presentations simpler.  
\end{rmk}

The ring homomorphism $s \co A_0 \to A_1$ is simply the inclusion of scalars, as 
$A_1$ is a free left module over $A_0$ of rank $p + 1$.  We will thus abbreviate 
$s(A_0)$ as $A_0$.  Following \cite{h2}, we will also abbreviate $\iota(x)$ as 
$x'$, which is written as $\widetilde{x}$ in \cite{ho, me}.  Note that 
$w_{p + 1} = 1$, $p | w_i$ for $2 \leq i \leq p$, $w_1 = -a$, and 
\begin{equation}
 \label{w_0}
 w_0 = (-1)^{p + 1} p = b b' 
\end{equation}
\cite[(3.30)]{ho}.

\section{Computing with Koszul complexes}
\label{sec:ext}

Recall that $\o = L_b$ is the $\G$-module of invariant 1-forms defined in 
Section \ref{subsec:n = 2}.  Write $\nul \ce L_0$, the $\G$-module annihilated 
by $\G$.  In this section, we compute $\Ext_\G^*(\o^m, \nul)$ for $m \geq 0$.  
By Proposition \ref{prop:koszul}, 
\[
 \Ext_\G^*(\o^m, \nul) \cong H^*\CC^\bullet(L_{b^m}, L_0) 
\]
where 
\[
 \CC^\bullet(L_{b^m}, L_0) \co A_0 \xrightarrow{-b^m} A_1 \xrightarrow{b'^m} A_1 
 / A_0 
\]

\begin{prop}
 For all $m \geq 0$, $H^0\CC^\bullet(L_{b^m}, L_0) \cong 0$.  
\end{prop}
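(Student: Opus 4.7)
The plan is to identify $H^0\CC^\bullet(L_{b^m}, L_0)$ with the kernel of a single $A_0$-linear map into $A_1$ and verify that this map is injective. Using the formula $d_0 f = \iota(f)\B - f\A$ from Section \ref{subsec:n = 2} with $\A = b^m$ and $\B = 0$, the zeroth differential becomes
\[
 d_0 \co A_0 \to A_1, \qquad f \mapsto -f \, b^m,
\]
which, up to sign, is the composite of the structural inclusion $s \co A_0 \hookrightarrow A_1$ with multiplication by $b^m$ inside $A_1$. Hence $H^0\CC^\bullet(L_{b^m}, L_0) \cong \ker d_0$, and it suffices to show that both constituents of this composite are injective.

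The inclusion $s$ is injective since $A_1$ is free of rank $p + 1$ over $A_0$ via $s$, so I would reduce the claim to showing that multiplication by $b$ is a non-zero-divisor on $A_1$; the claim for $b^m$ then follows by induction on $m$, the $m = 0$ case being trivial.

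To verify that $b$ is regular on $A_1 \cong A_0[b]/\bigl(w(a,b)\bigr)$, the key ingredient is the explicit value $w(a, 0) = w_0 = (-1)^{\,p+1} p$ from \eqref{w_0}. Suppose $b \, g \equiv 0 \pmod{w(a,b)}$ in the polynomial ring $A_0[b]$, so that $b \, g = w(a,b) \, h$ for some $h \in A_0[b]$. Setting $b = 0$ gives $0 = w_0 \cdot h(a, 0)$ in $A_0 = \BW\cF_p\lb a \rb$; since the latter is a domain and $w_0 \neq 0$, this forces $h(a, 0) = 0$, so $h = b \, \widetilde h$ for some $\widetilde h \in A_0[b]$. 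Cancelling $b$ in $A_0[b]$ (itself a domain) yields $g = w(a,b) \, \widetilde h$, hence $g \equiv 0$ in $A_1$. The only real obstacle, such as it is, is checking this regularity of $b$; once it is in hand the proposition is immediate, and no input beyond the explicit form of $w(a,b)$ is required.
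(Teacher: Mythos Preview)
Your argument is correct. You prove the stronger fact that $b$ is a non-zero-divisor in $A_1$, using only that $w_0 = w(a,0)$ is nonzero in the domain $A_0$; injectivity of $f \mapsto -f\,b^m$ for all $m$ then follows at once.

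The paper proceeds differently. It splits into two cases. For $0 \leq m \leq p$ it observes that $b^m$ is itself a member of the standard $A_0$-basis $1, b, \ldots, b^p$ of $A_1$, so $f\,b^m = 0$ forces $f = 0$ directly. For $m > p$ it reduces modulo $p$: since $w(a,b) \equiv b(b^p - a) \pmod{p}$, one has $b^{p+1} \equiv a b \pmod{(p,w)}$, hence $b^m \not\equiv 0 \pmod{(p,w)}$; together with $A_0$ being a domain and $A_1$ free over $A_0$, this gives injectivity. Your route is more uniform and yields the sharper conclusion that $b$ is regular on all of $A_1$, not merely nonzero; the paper's mod-$p$ computation, on the other hand, is the same device used in the proof of the next proposition (for $H^1$), so it has the virtue of setting up that argument.
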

\begin{proof}
 We need to show that $A_0 \xrightarrow{-b^m} A_1$ is injective.  Given $f(a) 
 \in A_0 \cong \BW \cF_p \lb a \rb$, suppose $-b^m \cdot f(a) = 0 \in A_1 \cong 
 \BW \cF_p \lb a, b \rb / \big(w(a, b)\big)$.  
 
 If $0 \leq m \leq p$, since $w(a, b)$ is a polynomial in $b$ of degree $p + 1$ 
 with coefficients in $A_0$, clearly $f(a)$ must be $0$.  
 
 If $m > p$, we need only show that $b^m \not\equiv 0 \md w(a, b)$.  Since 
 $w(a, b) \equiv b (b^p - a) \md p$, we have $b^{p + 1} \equiv a b \md (p, w)$, 
 and thus $b^m \not\equiv 0 \md (p,w)$.  
\end{proof}

\begin{prop}
 For all $m \geq 0$, $H^1\CC^\bullet(L_{b^m}, L_0) \cong 0$.  
\end{prop}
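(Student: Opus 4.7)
My plan is to show that the Koszul complex is exact at $A_1$. Concretely, given $g \in A_1$ with $b'^m g = h \in A_0$, I want to show $g$ lies in the image of $-b^m$, i.e.\ $g = \pm b^m \widetilde h$ for some $\widetilde h \in A_0$. The first step is to multiply the equation $b'^m g = h$ by $b^m$: using $bb' = w_0 = (-1)^{p+1} p$ from \eqref{w_0}, this becomes $w_0^m g = b^m h$, i.e.\ $p^m g = \pm b^m h$ in $A_1$. It then suffices to show $p^m \mid h$ in $A_0$, after which $g = \pm b^m (h/p^m)$ by $p$-torsion freeness of $A_1$ (which is free of rank $p+1$ over $A_0$).

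The heart of the argument is the auxiliary claim: for $h \in A_0$, if $b^m h \in p A_1$, then $h \in pA_0$. To prove it, note $A_1 \cong A_0[b]/(w)$ and, using the stated values of the $w_i$, $w \equiv b(b^p - a) \pmod p$. So $A_1/p \cong \cF_p\lb a\rb[b]/\bigl(b(b^p - a)\bigr)$. Now $\cF_p\lb a\rb[b]$ is a UFD, and both $b$ and $b^p - a$ are irreducible --- the latter by Eisenstein at the uniformizer $a$ of $\cF_p\lb a\rb$. If $\bar h \in \cF_p\lb a\rb$ were nonzero, say $\bar h = a^\ell u$ with $u$ a unit, then $u a^\ell b^m$ has prime factorization involving only $a$ and $b$, so it contains no factor of $b^p - a$, contradicting $b^m \bar h \in \bigl(b(b^p - a)\bigr)$. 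Hence $\bar h = 0$, i.e.\ $h \in pA_0$.

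A simple induction on $k$ then shows that $p^k g = b^m h$ in $A_1$ forces $h \in p^k A_0$: apply the auxiliary claim, cancel one $p$ by $p$-torsion freeness, and repeat. Taking $k = m$ gives $h = p^m \widetilde h$ and hence $g = \pm b^m \widetilde h \in b^m A_0$, as required.

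The main obstacle is the auxiliary claim itself: since $b^m$ is a zero divisor in $A_1/p$ (e.g., $b(b^p - a) = 0$), direct cancellation is unavailable, and the UFD/Eisenstein argument separating the irreducible factors $b$ and $b^p - a$ is the essential device. I would also verify the degenerate case $m = 0$ separately, where $b'^0 g = g \in A_0$ puts $g$ trivially in the image of $-b^0 = -1$.
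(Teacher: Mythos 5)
Your proof is correct, and it runs on the same engine as the paper's --- the identity $bb' = w_0 = (-1)^{p+1}p$, the factorization $w \equiv b(b^p-a) \md p$, and $p$-torsion-freeness of $A_1$ --- but it is organized along a genuinely different route. The paper inducts on $m$ at the level of the cocycle condition: writing $b'^{m-1}(b'g) = 0$ in $A_1/A_0$, it applies the inductive hypothesis to $b'g$, multiplies by $b$ to get $w_0\, g \equiv -b^m f \md w$, deduces $p \mid f$ from $b^m f \equiv 0 \md (p,w)$ via the relation $b^{p+1} \equiv ab$ and the $A_0$-basis $1, b, \ldots, b^p$, and then cancels $p$. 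You instead multiply by $b^m$ once, collapsing the whole problem to the single divisibility statement $p^m g = \pm\, b^m h$ with $h \in A_0$, and then peel off factors of $p$ by induction on the $p$-power, the mod-$p$ step being your auxiliary claim. That claim is exactly the statement the paper needs at each stage of its induction, but you justify it more robustly, by unique factorization in $\cF_p\lb a \rb[b]$ together with Eisenstein irreducibility of $b^p - a$, rather than by reducing $b^m$ through $b^{p+1} \equiv ab$ against the basis. The two arguments perform the same number of ``cancel one $p$'' steps; what yours buys is a clean, self-contained commutative-algebra lemma about $A_1/p$ (and a transparent reason why $b^m$ being a zero divisor there is harmless), while the paper's version stays closer to the cochain structure and is terser. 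Your separate handling of $m = 0$ and your appeal to freeness of $A_1$ over $A_0$ for torsion-freeness are both fine, so the argument is complete.
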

\begin{proof}
 Let $g(a, b)$ be a polynomial in $b$ of degree at most $p$ with coefficients in 
 $A_0$ that represents an element in $A_1$.  Suppose $b'^m \cdot g(a, b) = 0 \in 
 A_1 / A_0$.  We need to show that $g(a, b) \equiv -b^m \cdot f(a) \md w(a, b)$ 
 for some $f(a) \in A_0$.  
 
 We do this by induction on $m$.  The case of $m = 0$ is clear.  Let $m \geq 1$.  
 By the induction hypothesis, since $b'^{m - 1} \cdot b' g(a, b) = 0 \in A_1 / 
 A_0$, we have $b' g(a, b) \equiv -b^{m - 1} \cdot f(a) \md w(a, b)$.  
 Multiplying both sides by $b$, in view of \eqref{w_0}, we get 
 \begin{equation}
  \label{cong1}
  w_0 g(a, b) \equiv -b^m f(a) \md w 
 \end{equation}
 and thus 
 \begin{equation}
  \label{cong2}
  0 \equiv -b^m f(a) \md (p, w) 
 \end{equation}
 Since $b^{p + 1} \equiv a b \md (p, w)$, \eqref{cong2} implies that $p \,|\,\, 
 f(a)$ in $A_1$.  As $p$ is not a zero-divisor in $A_1$, \eqref{cong1} implies 
 that $g(a, b) \equiv -b^m \tilde{f}(a) \md w$ for some $\tilde{f}(a) \in A_0$.  
\end{proof}

\subsection{The second cohomology}
\label{subsec:H^2}

Finally, we compute $H^2\CC^\bullet(L_{b^m}, L_0)$.  Write $B_m \ce 
H^2\CC^\bullet(L_{b^m}, L_0) \cong A_1 / (A_0 + b'^m A_1)$.  Clearly, $B_0 \cong 
0$.  Let $m > 0$ for the rest of this section.  

As a free module over $A_0$, the ring $A_1$ has a basis consisting of 
\begin{equation}
 \label{basis1}
 1, \, b, \, b^2, \, \ldots, \, b^p 
\end{equation}

\begin{prop}
 In the $A_0$-module $B_m$, $p^m b^i = 0$ for $1 \leq i \leq p - 1$ and 
 $p^{m - 1} b^p = 0$.  
\end{prop}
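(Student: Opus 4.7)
The plan is to exploit the identity $bb' = w_0 = (-1)^{p+1} p$ recorded in \eqref{w_0}, together with the defining relation $w(a,b) = 0$ in $A_1$, to exhibit each class $p^j b^i$ in question as a sum of an element of $A_0$ and an element of $b'^m A_1$.

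For the first assertion, for each $1 \leq i \leq p - 1$, I would write
\[
 w_0^m \cdot b^i = (bb')^m b^i = b^{m+i} b'^m \in b'^m A_1,
\]
which already vanishes in $B_m$; since $w_0$ and $p$ differ by a unit in $A_0$, this gives $p^m b^i = 0$ in $B_m$.

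For the second assertion $p^{m-1} b^p = 0$ in $B_m$, the analogous naive calculation $p^{m-1} b^p \sim b^{m+p-1} b'^{m-1}$ falls one factor of $b'$ short. I would produce this missing $b'$ by reorganizing $w(a, b) = 0$: writing
\[
 0 = w(a, b) = b \bigl( b^p + w_p b^{p-1} + \cdots + w_2 b + w_1 \bigr) + w_0 = b \bigl( b^p + w_p b^{p-1} + \cdots + w_2 b + w_1 + b' \bigr),
\]
and using that multiplication by $b$ is injective on $A_1$ (which is free of rank $p+1$ over the domain $A_0$ on the basis $1, b, \ldots, b^p$), I obtain
\[
 b^p = a - b' - \sum_{i=1}^{p-1} w_{i+1} b^i,
\]
where I have used $w_1 = -a$. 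Multiplying through by $w_0^{m-1} = b^{m-1} b'^{m-1}$ yields three kinds of terms: the term $a \, w_0^{m-1}$ lies in $A_0$; the term $-b^{m-1} b'^m$ lies in $b'^m A_1$; and, for each $1 \leq i \leq p - 1$, the term $-w_{i+1} b^{m-1+i} b'^{m-1}$ has its coefficient divisible by $p$ (since $p \mid w_j$ for $2 \leq j \leq p$), so replacing one factor of $p$ in $w_{i+1}$ by $(-1)^{p+1} bb'$ supplies the missing $b'$ and places the term in $b'^m A_1$. Thus $w_0^{m-1} b^p \in A_0 + b'^m A_1$, and the same conclusion holds for $p^{m-1} b^p$ after adjusting by the unit relating $w_0^{m-1}$ to $p^{m-1}$.

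There is no serious obstacle; the argument is a direct manipulation of the two identities $bb' = w_0 = \pm p$ and $w(a,b) = 0$. The only subtle step is the cancellation of $b$ in deriving the formula for $b^p$, which is justified by the freeness of $A_1$ over the integral domain $A_0$.
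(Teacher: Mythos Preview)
Your argument is correct and matches the paper's proof essentially line for line: both use $w_0^m b^i = b^{m+i} b'^m$ for the first claim, and both rewrite $b^p$ via $w(a,b)=0$ as $a - b' - \sum_{j=2}^{p} w_j b^{j-1}$, then multiply by $w_0^{m-1}$ and absorb each resulting term into $A_0 + b'^m A_1$ using $p \mid w_j$ for $2 \le j \le p$. The only point to tighten is your justification that $b$ is a non-zero-divisor in $A_1$: freeness over a domain is not by itself enough (think of $k[x]/(x^2)$), but it follows here because $w(a,b)$ has nonzero constant term $w_0$ in the domain $A_0$, so $b\cdot x=0$ forces the top coefficient of $x$ (and then inductively all coefficients) to vanish.
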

\begin{proof}
 In view of \eqref{w_0}, we have $w_0^m b^i = b'^m b^m b^i = b'^m b^{m + i} = 0$ 
 and 
 \begin{equation}
  \label{equiv}
  \begin{split}
   w_0^{m - 1} b^p = & ~ w_0^{m - 1} (-b' - w_p b^{p - 1} - \cdots - w_2 b) \\
                   = & ~ -b^{m - 1} b'^m - w_0^{m - 1} w_p b^{p - 1} - \cdots - 
                         w_0^{m - 1} w_2 b \\
                   = & ~ -w_0^{m - 1} w_p b^{p - 1} - \cdots - w_0^{m - 1} w_2 b 
  \end{split}
 \end{equation}
 Since $p | w_i$ for $2 \leq i \leq p$, the last expression has a factor of 
 $w_0^m$ and so vanishes as we have just shown.  
\end{proof}

Let $1 \leq m \leq p$.  Under the map of multiplication by $b'^m$, the elements 
in \eqref{basis1} become 
\begin{equation}
 \label{basis2}
 b'^m, \, w_0 b'^{m - 1}, \, w_0^2 b'^{m - 2}, \, \ldots, \, w_0^{m - 1} b', \, 
 w_0^m, \, w_0^m b, \, \ldots, \, w_0^m b^{p - m} 
\end{equation}
Note that $w_0^{m - 1} b' = 0$ in $B_m$ is equivalent to \eqref{equiv}.  Thus, 
as a quotient of $(A_0 / p^m)^{\oplus p - 1} \oplus A_0 / p^{m - 1}$ from the 
above proposition, $B_m$ has relations given precisely by the vanishing of the 
first $(m - 1)$ terms in \eqref{basis2}.  

To write down these relations explicitly, with notation as in \cite[Theorem 
1.6~(ii)]{me}, we have 
\[
 b'^k = d_{p, k} b^p + d_{p - 1, k} b^{p - 1} + \cdots + d_{0, k} \hskip 1cm 2 
 \leq k \leq m \leq p 
\]
(cf.~\cite[Section 4.1]{me} for $k > p$).  In particular, the formula for the 
coefficient $d_{p, k}$ has a leading term $(-1)^k w_1^{k - 1} w_{p + 1}$.  Thus 
setting $w_0^{m - k} b'^k$ to be zero in $B_m$ gives an expression for 
$w_0^{m - k} w_1^{k - 1} b^p$ in terms of an $A_0$-linear combination of 
$b^{p - 1}$, \ldots, $b$, and, possibly, $b^p$ itself if there are more than one 
term in $d_{p, k}$ not divisible by $p^{m - 1}$.  

The case of $m > p$ is similar.

\section{Proof of Theorem 1.2}
\label{sec:pf}

Recall that given a Morava $E$-theory $E$ of height $n$, the completed 
$E$-homology functor is defined as $E^\wedge_*(-) \ce \pi_*(E \wedge -)_{K(n)}$.  
In particular, 
\begin{equation}
 \label{ceh}
 E^\wedge_*(\Phi_n X) \cong E^\wedge_*(\Phi_{K(n)} X) 
\end{equation}
since the map $\id \wedge L_{K(n)} \co E \wedge \Phi_n X \to E \wedge L_{K(n)} 
\Phi_n X$ induces a $K(n)$-equivalence by the K\"unneth isomorphism.  

In \cite{h2}, Rezk sets up a composite functor spectral sequence (CFSS) followed 
by a mapping space spectral sequence (MSSS) to compute the homotopy groups of 
derived mapping spaces $\widehat{\CR}_E(A, B)$ between $K(n)$-local augmented 
commutative $E$-algebras $A$ and $B$.  He identifies the $E_2$-term in the CFSS 
as $\Ext$-groups over the \DL algebra $\G$.  The CFSS converges to the 
$E_2$-term in the MSSS.  

In particular, \cite[\S 2.13]{h2} shows that this setup specializes to compute 
the $E$-cohomology of the topological Andr\'e-Quillen homology 
${\rm TAQ}^{\SS_{K(n)}}(\SS_{K(n)}^{S^{2 m + 1}_+})$, and that the two spectral 
sequences both collapse at the $E_2$-term when $n = 2$.  Here $A = 
E^{S^{2 m + 1}_+} \ce F(\Sigma_+^\infty S^{2 m + 1}, E)$ and $B = E \rtimes E$ 
is a {\em square-zero extension} (see \cite[\S 5.10]{h2}).  

Now, by \cite[Theorem 8.1]{BKTAQ} and \eqref{ceh}, we identify the abutment of 
the MSSS as 
\[
 \pi_{t - s} \widehat{\CR}_E(E^{S^{2 m + 1}_+}, E \rtimes E) \cong \pi_{t - s} F 
 \big( {\rm TAQ}^{\SS_{K(2)}}(\SS_{K(2)}^{S^{2 m + 1}_+}), E \big) \cong 
 E^\wedge_{t - s}(\Phi_2 S^{2 m + 1}) 
\]
For a fixed $t$, Rezk identifies the possibly nonzero terms on the $E_2$-page of 
the CFSS as $\Ext_{{\rm Mod}_\G^\star}^s(\o^m, \o^{(t - 1) / 2} \otimes \nul)$, 
where ${\rm Mod}_\G^\star$ is the category of {\em $\BZ/2$-graded} $\G$-modules 
in the sense of \cite[\S 5.6]{h2}.  Thus for degree and periodicity reasons, we 
may set $t = 1$ and the calculations in Section \ref{sec:ext} then complete the 
proof, with $b^i$ written as $x_i$ in Theorem \ref{thm}.  \qed

\renewcommand\refname{}
\newcommand{\AX}[1]{\href{http://arxiv.org/abs/#1}{arXiv:#1}}
\newcommand{\MRn}[2]{\href{http://www.ams.org/mathscinet-getitem?mr=#1}{MR#1#2}}
\newcommand{\name}{TateNormalLevelResolutions.pdf}
\wt{.}\vspace{-.63in}

\end{document}